\newcommand{\bb}{\begin{bmatrix}}
\newcommand{\eb}{\end{bmatrix}}
\numberwithin{equation}{section}
\definecolor{amethyst}{rgb}{0.6, 0.4, 0.8}
\definecolor{orange}{rgb}{1,0.5,0}
\newtheorem{Algorithm}{Algorithm}[section]
\newtheorem{Theorem}{Theorem}[section]
\newtheorem{Lemma}{Lemma}[section]
\newtheorem{example}{Example}[section]
\newtheorem{Definition}{Definition}[section]
\def\lp{{\limsup\limits_{k\rightarrow\infty}}}
\newbox\@temp
\def \blap#1{\vbox to 0mm{#1\vss}}
\newcommand{\bset}[3][2mm]{#3\llap{%
      \blap{\vskip#1\hbox to 0mm{\hss $#2$\hss}}%
      \setbox\@temp\hbox{\mbox{$#3$}}\hskip0.5\wd\@temp}}
\def\hhline{%
  \noalign{\ifnum0=`}\fi\hrule \@height 3\arrayrulewidth \futurelet
   \@tempa\@xhline}
\begin{document}
\begin{frontmatter}

\title{
On the semigroup property for some structured iterations
}

\author{Matthew M. Lin \corref{cor1}\fnref{fn2}}
\ead{mhlin@mail.ncku.edu.tw}
\address{Department of Mathematics, National Cheng Kung University, Tainan 701, Taiwan.}
\author{Chun-Yueh Chiang\corref{cor2}\fnref{fn1}}
\ead{chiang@nfu.edu.tw}
\address{Center for General Education, National Formosa
University, Huwei 632, Taiwan.}

\cortext[cor2]{Corresponding author}
\fntext[fn1]{The first
author was supported by the Ministry of Science and Technology of Taiwan under grant 107-2115-M-006-007-MY2.}
\fntext[fn2]{ The second author was supported by the Ministry of Science and Technology of Taiwan under grant 107-2115-M-150-002.}

\date{ }
\begin{abstract}

Nonlinear matrix equations play a crucial role in science and engineering problems. However, solutions of nonlinear matrix equations cannot, in general, be given analytically. One standard way of solving nonlinear matrix equations is to apply the fixed-point iteration with usually only the linear convergence rate.  To advance the existing methods, we exploit in this work one type of semigroup property and use this property to propose a technique for solving the equations with the speed of convergence of any desired order. We realize our way by starting with examples of solving the scalar equations and, also, connect this method with some well-known equations including, but not limited to, the Stein matrix equation, the generalized eigenvalue problem, the generalized nonlinear matrix equation, the discrete-time algebraic Riccati equations to express the capacity of this method.

\end{abstract}

\begin{keyword}
Nonlinear matrix equations,\,Sherman Morrison Woodbury formula,\,Semigroup property,\,Iterative methods,\,Acceleration methods,\,{R}-superlinear convergence
\MSC 65F10 \sep  65H05 \sep  15A24\sep 15A86
\end{keyword}
\end{frontmatter}

\pagestyle{myheadings} \thispagestyle{plain}
\section{Introduction}
Finding one or
more roots of a nonlinear matrix equation $F(X)=0$ over the filed $\mathbb{C}^{n\times n}$ is one of the more commonly occurring problems of
applied mathematics. In most cases explicit solutions are not available, finding the roots often require iterative methods, for example,
\begin{itemize}
\item[1.] the fixed-point iteration:
 \begin{align}\label{fixiter}
{X_{k+1}=G(X_k)}, \quad X_1 \mbox{ is given,}
\end{align}
where $G(X):= X-F(X)$.
\item[2.] the classic
Newton's iteration:
\begin{align}\label{NW}
{X_{k+1}=X_k-(F_{X_k}^{'})^{-1}(F(X_k))},\quad X_1 \mbox{ is given,}
\end{align}
where $F_{Z}^{'}$ denoted the Fr$\rm\acute{e}$chet derivative of $F$ at $Z$.
\end{itemize}
It is known that the fixed-point iteration~\eqref{fixiter} requires less computational costs, but its convergence speed is frequently linear. On the contrary, the classic Newton's iteration~\eqref{NW} under certain conditions has local quadratic convergence but requires more computational costs~\cite{Kelley95}.


%
In this work, we investigate under what circumstance, it can accelerate the fixed-point iteration. We use the following definition for the rate of convergence to evaluate the speed at which a convergent sequence reaches its limit. See
~\cite{doi:10.1137/1.9780898719468,Kelley95,Potra2001,Bini2012} and the references therein.

\begin{Definition}\label{def:cov}
Let $r>1$ be an integer.
Given a sequence $\{X_k\}\subseteq\mathbb{C}^{n\times n}$ and an induced matrix norm $\|.\|$, then
\begin{itemize}
  \item [1.] $X_k$ converges \rm{R}-linearly to $X_*$ if
\begin{align}\label{q1}
\limsup\limits_{k\rightarrow\infty}\sqrt[k]{\|X_k -  X_*\|} \leq \sigma, \quad \sigma \in (0,1),
\end{align}
and $X_k$ converges \rm{R}-superlinearly to $X_*$ with order $r$ if
\begin{align}\label{q2}
\limsup\limits_{k\rightarrow\infty}\sqrt[r^k]{\|X_k -  X_*\|}\leq\sigma, \quad \sigma \in (0,1).
\end{align}
%

%

 \item [2.] $X_k$ converges \rm{Q}-linearly to $X^*$ if 
\begin{align}\label{q3}
\limsup\limits_{k\rightarrow\infty}\dfrac{\|X_{k+1}-X_*\|}{\|X_k-X_*\|} = \sigma, \quad \sigma \in (0,1),
\end{align}
and $X_k$ converges \rm{Q}-superlinearly to $X_*$ with order $r$ if $\lim\limits_{k\rightarrow\infty}X_k=X_*$ and
\begin{align}\label{q4}
\limsup\limits_{k\rightarrow\infty}\dfrac{\|X_{k+1}-X_*\|}{\|X_k-X_*\|^r} = \sigma\in\mathbb{R}, \quad \sigma>0.
\end{align}
\end{itemize}
\end{Definition}
%
%
{In particular,  we say that 
$X_k$ converges R-quadratically, R-cubically, and   Q-sublinearly to $X_*$ if 
\[
 r=2 \mbox{ in \eqref{q2}, } r=3 \mbox{ in \eqref{q2}}, \mbox{ and } \lim\limits_{k\rightarrow\infty}X_k=X_*, \sigma=1 \mbox{ in \eqref{q3}, }
\]
respectively, are further satisfied.}

For a direct interpretation of methods with high order convergence,
see authors in~\cite{MR2156099, MR777559} investigate a class of higher order Newton-type iterations:
\[
{X_{k+1}=X_k-\sum\limits_{k=1}^\infty a_k[\mathcal{L}_F(X_k)]^{k-1}(F_{X_k}^{'})^{-1}(F(X_k))},\quad X_1 \mbox{ is given,}
\]
where $F_{X_k}^{'}$ is the Fr$\rm\acute{e}$chet derivative of $F$ at $X_k$,
the sequence $\{a_k\}$, satisfying $\sum\limits_{k=1}^\infty a_k t^k<\infty$ for some $t\neq 0$,
 is a non-increasing real sequence of nonnegative numbers with $a_1=1$ and $a_2=\frac{1}{2}$, and $\mathcal{L}_F(t)$ is the so-called ``degree of logarithmic convexity''\cite{MR1686356}. Note that in the scalar case~\cite{MR2156099,MR3144340,MR3293985},
 the operation $\mathcal{L}_F(t)$ is given by $\mathcal{L}_F(t)=\dfrac{F(t)F^{''}(t)}{(F^{'}(t))^2}$~\cite{MR777559},
 and
\begin{itemize}
\item[1.] if $a_k = 0$ for $k >2$, we have the Chebyshev method with the constructed sequence converging R-superlinearly to the solution with order $r=3$,

\item [2.]  if $a_k=\dfrac{1}{2^{k-1}}$ for $k> 2$, we have the Halley method  with the constructed sequence converging R-superlinearly to the solution with order $r=3$,

 \item[3.] if $a_k=(-1)^{k-1}\binom{-\frac{1}{2}}{k-1}$ for $k>2$, we have the Ostrowski method with the constructed sequence converging R-superlinearly to the solution with order $r=4$.

\end{itemize}

This implies that Newton-type methods generally work better with high order convergence, when applied to find non-repeated roots of a differentiable function. However,  these methods require the computation of the inverse of Fr$\rm\acute{e}$chet derivatives of the objective function. This indicates that the Newton-type methods can only be used when the objective functions are differentiable. In particular, there is no guarantee that  Newton-type methods will converge if the selected starting values are too far from the exact solutions. To this end, once the objective function is not differentiable, we look for an accelerated approach to speed up the iterations so that the order of the rate of the convergence can be as high as possible.

More precisely, suppose $\{X_i\}$ be a sequence generated by the fixed-point iteration. We would like to develop a method which can obtain a subsequence of $\{X_i\}$, say $\{X_{i_j}\}$ with $i_j\geq i$, so that the subsequence $\{X_{i_j}\}$ converges to the desired solution more rapidly than the sequence $\{X_i\}$ does.
One way to do that is
based on the original fixed-point iteration~\eqref{fixiter} to build up an iteration:
\begin{equation}\label{fixiter2}
Y_{k+1}=G_r(Y_k)
\end{equation}
with $Y_1:=X_1$ such that for a positive integer $r>1$, $Y_k=X_{r^{k-1}}$ for each $k\geq 1$.  Namely, the operation $G_r$ construct a new relationship between $X_{r^{k}}$ and $X_{r^{k-1}}$.  Though this presented idea is simple, but the outcome result after the accelerated process is fruitful and can be elucidated as below:

\begin{itemize}
\item[1.] iterations from~\eqref{fixiter}
\[
X_1\rightarrow X_2 \rightarrow X_3\cdots \rightarrow X_k
\rightarrow\cdots,
\]
\item[2.] iterations from~\eqref{fixiter2}
\[
X_1\rightarrow X_r \rightarrow X_r^2\cdots \rightarrow X_{r^{k-1}}
\rightarrow\cdots.
\]
\end{itemize}
However, how to make sure the existence of the iterated operation $G_r$ is the primary goal
of our work. We begin with our investigation through the observation of the following example.

\begin{example}\label{simit}
Consider a linear scalar equation defined by
\begin{equation}\label{SI}
x=ax+b,
\end{equation}
where $a,b\in \mathbb{C}$ with $|a|<1$.
This solution $x^* = \frac{b}{1-a}$ can be found without any difficulty. Here, we see that how the fixed-point iteration, can be applied and accelerated to calculate the solution $x^*$. That is, given an initial value $x_1$, iterate
\begin{equation*}
x_{k+1}=a x_{k}+b, \quad k=1,\ldots
\end{equation*}
until convergence.
Observe that the $r^{k}$-th iterative step can be expressed in items of the $r^{k-1}$-th iterative step as follows:
\begin{align*}
x_{r^k} =a_k{x_{r^{k-1}}}+b_k,
\end{align*}
where ${a}_k = a^{r^{k-1}(r-1)}$ and
${b}_{k}= b\dfrac{1-a^{r^{k-1}(r-1)}}{1-a}$,
 for $k\geq 1$.
Let 
$\hat{x}_k = x_{r^{k-1}}$.  We can see that the sequence $\{x_k\}$ and
 the sequence
 $\{\hat{x}_k\}$, given by
 \begin{subequations}\label{doubit}
 \begin{align}
\hat{x}_{k+1} &={a}_{k}\hat{x}_{k}+{b}_{k},\quad \hat{x}_1=x_{1},\\
{a}_{k+1} &={a}_{k}^r, \quad {a}_1 = a,\\
{b}_{k+1} &={b}_{k}\dfrac{1-{a}_{k+1}}{1-{a}_{k}},\quad {b}_1=b,
\end{align}
\end{subequations}
 satisfying
\begin{equation*}
x_k-x^*=a^{k-1}(x_1-x^*), \quad
\hat{x}_k-x^*
=a^{r^{k}-r}(\hat{x}_1-x^*),
\end{equation*}
 and
\begin{align*}
\lp\sqrt[k]{|x_k-x^*|}=\lp\sqrt[r^k]{|\hat{x}_k-x^*|}=|a|,
\end{align*}
that is, though $x_k$  converges R-linearly to $x^*$, $\hat{x}_k$ given by~\eqref{doubit}  converges R-superlinearly to $x^*$ with order $r$. Moreover, when $\hat{x}_1\neq x^*$ we have $\hat{x}_k\neq x^*$ for each $k$ and
{
\begin{equation*}
\lp
{\left | \dfrac{\hat{x}_{k+1} - x^*}{(\hat{x}_{k} - x^*)^r} \right |}
%
%
=  \dfrac{|a|^{r^2-r}}{|\hat{x}_1-x^*|^{r-1}} .
\end{equation*}}
This shows that $\hat{x}_k$ even converges $Q$-superlinearly to $x^*$ with order $r$. 
%
\end{example}

Note that Example~\ref{simit} provides an accelerated method by constructing the sequence $\{\hat{x}_k\}$, but, in general,  the sequence $\{\hat{x}_k\}$ does not exist or could not be built up as above. In the subsequent discussion, we would like to show that under a certain condition, the fixed-point iteration can be accelerated with the rate of convergence of any desired order.

This work is organized as follows.
In Section 2, we introduce two nonlinear matrix equations and provide the fixed-point iteration to compute the exact solutions. To accelerate the convergence speed, we propose methods having sequences which are R-superlinearly convergent with order $r$.
In Section 3, we introduce the definition of the semigroup property. Inspired by the examples in Section~2, we show that once an iteration has the semigroup property, the iteration will converge to the solution with the rate of convergence of any desired order.
In Section~4,  we show the practicability and effectiveness of our accelerated iteration by several real-life examples, and concluding remarks are given in Section 5.
%

\section{Dynamical behaviors for solving nonlinear scalar equations}\label{sec2}
In this section, we intend to review computational techniques for solving scalar equations and hand accelerated techniques through the observation of the iterative behaviors. Though these two examples are simple and could be solved theoretically, we see how the iterative way can be applied and accelerated to solve these two examples.


\begin{example}\label{ex1}
Consider a nonlinear scalar equation:
  \begin{align}\label{eq1}
          x=\dfrac{bx}{b+x-a},
  \end{align}
where $a$ and $b$ are two given different complex numbers and $b\neq 0$.
It is clear that solutions of Eq.~\eqref{eq1} include $x=0$ and $x=a$. To iteratively solve~\eqref{eq1}, we observe the following iteration
\begin{equation}\label{eq:xk0}
x_{k+1}=g(x_{k},b):=\dfrac{bx_{k}}{b+x_k-a}, \quad k = 1, 2, \ldots,
\end{equation}
with $x_1 = b$.
Let $\mathbf{s} = \cup_{p\geq 1}\{e^{\frac{2ji\pi}{p}};0\leq j \leq p-1\}$.
%
Note that
%
the sequence $\{x_k\}$ is well-defined if and only if $ \dfrac{x_1}{x_1-a}\not\in \mathbf{s}$ or $ \dfrac{x_1}{x_1-a}=1$ \rm{(} i.e., $a = 0$\rm{)}, and
the explicit form of $x_k$ can be expressed equivalently as
\begin{align}
     x_k&=\dfrac{ax_1^k}{x_1^k-(x_1-a)^k},\, { \mbox{if } a\neq 0 \mbox{ and }
      \dfrac{x_1}{x_1-a} \not\in \mathbf{s}}, \\
    x_k&=\dfrac{x_1}{k},\, \mbox{if }  a=0.
\end{align}
This implies that if $\dfrac{x_1}{x_1-a} \not\in \mathbf{s}$, then
\begin{align*}
  \limsup\limits_{k\rightarrow\infty}\sqrt[k]{|x_k-0|}&\leq \left |\frac{x_1}{x_1 -a} \right |, \quad\mbox{if }|\frac{x_1}{x_1 -a}|<1, \\
   \limsup\limits_{k\rightarrow\infty}\sqrt[k]{|x_k-a|}&\leq \left|\frac{x_1-a}{x_1} \right |, \quad\mbox{if }|\frac{x_1}{x_1 -a}|>1,
 \end{align*}
%
%
%
i.e., we have $x_k$ converges {\rm R}-linearly to $0$ if $\left |
  \frac{b}{b -a}
 \right |<1$ and to  $a$ if $\left |
  \frac{b}{b -a}  \right | >1$,  and  $x_k$ converges {\rm Q}-sublinearly to $0$  if $a = 0$ or $\left |
  \frac{b}{b -a}  \right | =1$.

For any positive integer $r>1$, let $g_r$ be a function defined  recursively by
  \begin{align*}
  h_{r}(x)&:=g(h_{r-1}(x), x)
   \end{align*}
with $h_{1}(x) = x$. Clearly, we have $x_r = h_r(x_1)$ if $x_1 = b$, and
   \begin{align*}
     h_r(x)&=\dfrac{ax^r}{x^r-(x-a)^r},\quad a\neq 0 , \\
    h_r(x)&=\dfrac{x}{r},\quad a=0.
     \end{align*}
To accelerate the iterations induced by the sequence $\{x_k\}$, let $\{y_k\}$ be a new sequence defined by
\begin{equation*}
y_k=h_{r}(y_{k-1})
\end{equation*}
with $y_{1}=x_1$.
Observe that when $a\neq 0$,
\begin{eqnarray*}
1+\frac{a}{y_k-a}
&=& 1 +  \frac{y_{k-1}^r - (y_{k-1}-a)^r}{(y_{k-1}-a)^r}
= \frac{(y_{k-1})^r}{(y_{k-1}-a)^r}
=(1+\frac{a}{y_{k-1}-a})^r
\\
&=&(1+\frac{a}{y_{1}-a})^{r^{k-1}},
\end{eqnarray*}
or, 
\begin{equation*}
 y_k - a = \frac{a}{ (\frac{y_1}{y_1-a})^{r^{k-1}} - 1}.
\end{equation*}
Thus, we have
\begin{align*}
  \limsup\limits_{k\rightarrow\infty}\sqrt[r^k]{|y_k-0|}&\leq  \left  |\frac{y_1}{y_1-a} \right  |, \quad\mbox{if } \left  | \frac{y_1}{y_1-a} \right  |<1, \\
   \limsup\limits_{k\rightarrow\infty}\sqrt[r^k]{|y_k-a|}&\leq
   \left  |\frac{y_1-a}{y_1} \right  |, \quad\mbox{if } \left  |\frac{y_1}{y_1-a} \right  | >1.
 \end{align*}
%
%
%
Moreover, when $a =0$ or $\left |
  \frac{b}{b -a}  \right | =1$,  ${y}_{k+1}=\dfrac{y_k}{r}$, i.e., $y_k$ converges to $0$ {\rm Q}-linearly with rate $\dfrac{1}{r}$.
  \end{example}

In Example~\ref{ex1}, we point out that once the parameter $b$ is viewed as a variable, the iteration~\eqref{eq:xk0}, applied to~\eqref{eq1}  with $x_1 = b$, provides a faster convergence.  We thus are interested in investigating whether such observation is not a special case and is available for a general problem. One immediate extension can be seen as below.
\begin{example}\label{ex2}
Let $a$ and $b$ be two complex numbers such that $ab\neq 0$.
Consider the nonlinear equation
  \begin{align}\label{eq2}
\bb x\\ y\eb=\bb \dfrac{a}{a+y}x \\ \dfrac{b}{a+y}y  \eb.
  \end{align}
As for~\eqref{eq2}, there are infinite solutions and can be collected by the set $S=[0,b-a]^\top \cup \left \{[x,0]^\top|x\in\mathbb{C}\right \}$. To iteratively solve~\eqref{eq2}, we consider the iteration
\begin{equation*}
Z_{k+1}=G(Z_k,Z_1):= \bb \dfrac{x_1}{x_1+y_k}x_k \\ \dfrac{y_1}{x_1+y_k}y_k  \eb,
\end{equation*}
where
  $Z_k = \bb x_k \\y_k\eb$, for $k\geq 1$, $x_1 = a$, and $y_1 = b$,
  if no breakdown occurs. When $
{\dfrac{x_1}{y_1} \not\in \mathbf{s}}$, it can be verified that the iteration $Z_k$ can be continued and
\begin{align*}
  x_{k}&=\dfrac{x_1-y_1}{x_1^k-y_1^k}x_1^k,\\
  y_{k}&=\dfrac{x_1-y_1}{x_1^k-y_1^k}y_1^k,
  \end{align*}
   and $x_k=y_k=\dfrac{x_1}{k}$ if $x_1=y_1$. Furthermore, the convergence behavior of $Z_k$ includes three cases:
 \begin{align*}
\begin{array}{l}
{\rm (1)} \mbox{ if }  \left  |\frac{y_1}{x_1} \right | < 1, \mbox{ then}
\left \{
\begin{array}{l}
 \limsup\limits_{k\rightarrow\infty}\sqrt[k]{|x_k-(x_1-y_1)|} \left  |\frac{y_1}{x_1} \right |,\\
  \limsup\limits_{k\rightarrow\infty}\sqrt[k]{|y_k - 0|}\leq  \left  |\frac{y_1}{x_1} \right |,
  \end{array}
\right .  \\
{\rm(2)} \mbox{ if }  {x_1}= {y_1}, \mbox{ then}
\left \{
\begin{array}{l}
 \limsup\limits_{k\rightarrow\infty}\frac{|x_{k+1}-0|}{|x_k-0|} = 1,\\
  \limsup\limits_{k\rightarrow\infty}\frac{|y_{k+1}-0|}{|y_k-0|}= 1,
  \end{array}
\right .  \\
{\rm (3)} \mbox{ if }  \left  |\frac{y_1}{x_1} \right | > 1, \mbox{ then}
\left \{
\begin{array}{l}
\limsup\limits_{k\rightarrow\infty}\sqrt[k]{|{x}_k-0|}
\leq  \left  |\frac{y_1}{x_1} \right |,\\
\limsup\limits_{k\rightarrow\infty}\sqrt[k]{|{y}_k-({y}_1-{x}_1)|}
\leq  \left  |\frac{y_1}{x_1} \right |,
  \end{array}
\right .
\end{array}
\end{align*}
  i.e.,
 \begin{align*}
  &{\rm (1)}  \mbox{ if } \left  |\frac{y_1}{x_1} \right | < 1, \mbox{ then } Z_k\rightarrow  \bb x_1-y_1\\0\eb\mbox{R-linearly},\\
  &{\rm (2)} \mbox{ if }  {x_1} = {y_1} , \mbox{ then } Z_k\rightarrow  \bb 0\\0\eb\mbox{Q-sublinearly},\\
  &{\rm (3)}
  \mbox{ if }   \left  |\frac{y_1}{x_1} \right | > 1, \mbox{ then }  Z_k\rightarrow  \bb 0\\y_1-x_1\eb\mbox{R-linearly}.
  \end{align*}
To speed up the convergence, let $H_r$, $r>1$, be an iterated function defined recursively by
  \begin{align}\label{eq:hrz}
  H_{r}(Z)&:=G(H_{r-1}(Z),Z),\\
  H_{1}(Z)&:=Z,
  \end{align}
  where $Z = \bb x\\y \eb$. Now, repeatedly applying~\eqref{eq:hrz} generates
 \[
H_r(Z)=\frac{1}{\sum\limits_{j=0}^{r-1} x^j y^{r-1-j}}\bb x^r \\ y^r \eb.
\]
Setting
\[
\widehat{Z}_k=\bb \hat{x}_k\\ \hat{y}_k \eb:= H_r ( \widehat{Z}_{k-1}),
\]
with $\bb \hat{x}_1\\ \hat{y}_1 \eb=\bb {x_1}\\ {y_1} \eb$, we can prove by induction that
\begin{align*}
\hat{x}_k & =\dfrac{x_1-y_1}{x_1^{r^{k-1}}-y_1^{r^{k-1}}}{x}_1^{r^{k-1}},\\
\hat{y}_k & = \dfrac{x_1-y_1}{x_1^{r^{k-1}}-y_1^{r^{k-1}}}{y}_1^{r^{k-1}},
\end{align*}
if {$\dfrac{x_1}{y_1} \not\in \mathbf{s}$, and $\hat{x}_k=\hat{y}_k=\dfrac{\hat{x}_{k-1}}{r}$ if $x_1=y_1$. Overall,} this leads to interesting convergence properties, i.e.,
\begin{align*}
  &{\rm (1)}  \mbox{ if } \left  |\frac{y_1}{x_1} \right | < 1, \mbox{ then } \widehat{Z}_k\rightarrow  \bb x_1-y_1\\0\eb\mbox{R-superlinearly with order $r$},\\
  &{\rm (2)} \mbox{ if }  {x_1} = {y_1} , \mbox{ then } \widehat{Z}_k\rightarrow  \bb 0\\0\eb\mbox{Q-linearly},\\
  &{\rm (3)}
  \mbox{ if }   \left  |\frac{y_1}{x_1} \right | > 1, \mbox{ then }  \widehat{Z}_k\rightarrow  \bb 0\\y_1-x_1\eb\mbox{R-superlinearly with order $r$}.
  \end{align*}

%
%

  \end{example}

Regarding the accelerated methods discussed in Example~\ref{ex1} and Example~\ref{ex2}, we would like to ask whether there is a general rule to follow, i.e., a connection between these two examples. As a consequence, we observe that in Example~\ref{ex1}, for $x,y,z\in\mathbb{C}$, the operation $g$ satisfies
\begin{subequations}~\label{eq:ob1}
\begin{align}
g(g(x,y),z)&=\dfrac{z}{z+g(x,y)-a}g(x,y)=\dfrac{z}{z+\frac{y}{y+x-a}x-a}(\frac{y}{y+x-a}x)\\
&=\dfrac{xyz}{xy+(z-a)(x+y-a)}=\dfrac{xyz}{zy+(x-a)(z+y-a)}\\
&=\dfrac{\frac{z}{z+y-a}y}{\frac{z}{z+y-a}y+x-a}x=\dfrac{g(y,z)}{g(y,z)+x-a}x=g(x,g(y,z)).
\end{align}
\end{subequations}
Also, for
$X=\bb x_1\\x_2\eb,Y=\bb y_1\\y_2\eb,Z=\bb z_1\\z_2\eb\in\mathbb{C}^{2\times 1}$, the operation $G$ in Example~\ref{ex2} satisfies
%
\begin{subequations}~\label{eq:ob2}
\begin{align}
G(G(X,Y),Z)&=\dfrac{1}{z_1+\dfrac{y_2x_2}{y_1+x_2}}\bb \dfrac{y_1x_1}{y_1+x_2}z_1\\ \dfrac{y_2x_2}{y_1+x_2}z_2 \eb
=\dfrac{1}{z_1y_1+z_1x_2+y_2x_2}\bb x_1y_1z_1\\ x_2y_2z_2 \eb\\
&=\dfrac{1}{x_2+\dfrac{z_1y_1}{z_1+y_2}}\bb \dfrac{z_1y_1}{z_1+y_2}x_1\\ \dfrac{z_2y_2}{z_1+y_2}x_2 \eb=G(X,G(Y,Z)).
\end{align}
\end{subequations}

From~\eqref{eq:ob1} and~\eqref{eq:ob2}, we see that the matrix operators, $g$ and $G$, defined in Examples~\ref{ex1} and~\ref{ex2}, satisfy a specific property, the associative law. In the coming section, we would like to show that for this property, we can come up with an accelerated way for the original iteration.

\section{The semigroup action on a binary operator}
In the introduction, we express a way to speed up the fixed-point iteration, and even more, in Section~\ref{sec2}, we show that both accelerated methods have a common feature, the associative rule, observed by the definition of matrix operators. Now, we are interested to see that if the matrix operator associated with the fixed-point iteration has the associative rule, the accelerated method is always attainable. Our concept is started from the observation of the ``flow" in the ordinary differential equations.   Note that if $\hat{x}(t;x_0) = x(t)$ is a solution of the autonomous system with an initial value $x_0$ and $f\in\mathbb{C}^1$, i.e.,
\begin{align*}
 \dot{x}&=f(x(t)),\, t\geq0,\\
 x(0)&=x_0{\mbox{ is given}}.
\end{align*}
Then the solution $\hat{x}$ must satisfy the following \emph{group property}
 \[
 \hat{x}(s+t;x_0)=\hat{x}(s; \hat{x}(t;x_0))
 =\hat{x}(t; \hat{x}(s;x_0)), \quad s,\,t\geq0.
 \]
For a discrete iteration, consider a difference equation defined by
%
\begin{align}\label{ivp1}
 x_{k+1}&=F(x_k),\quad k\geq1,\\
 x_1&{\mbox{ is given}}, \nonumber
\end{align}
where $F$ is an iterative operator.
Let $\hat{x}_{k;x_1} = x_k$ be a solution of~\eqref{ivp1}.
Like the autonomous system, the value of $\hat{x}$ in~\eqref{ivp1} is related to the selection of the initial value.  That is, the intermediate results of the action of the operator $F$ are affected by the initial value $x_1$. Without loss of generality, we can regard $F$ as a binary matrix operator and consider the iteration in~\eqref{ivp1} in terms of
\begin{equation}\label{eqF2}
x_{k+1} = F(x_k,x_1),
\end{equation}
where $x_1$ is thought of as another variable. Our subsequent discussion is to see that under what circumstance the following associative event
 \begin{align}\label{group0}
\hat{x}_{s+t; x_1}
 =\hat{x}_{s; \hat{x}_{t;x_1}}
 =\hat{x}_{t; \hat{x}_{s;x_1}},\quad s,t\geq 1,
 \end{align}
 is satisfied if $F$ is an operator of two variables.
In Example~\ref{simit}, we use this property~\eqref{group0} to accelerate the original fixed-point iteration. In our subsequent discussion, we see how to expedite the fixed-point iteration in~\eqref{eqF2} once this feature seen in~\eqref{group0} is satisfied.

To start with, we have to introduce the definition of the semigroup property.  We show that once the iteration,  as is given in~\eqref{eqF2}, has a semigroup property, we can build up an accelerated iteration with the rate of convergence of any desired order. In the algebra, we say that a set has the semigroup property if this set together with an operator must satisfy an associative rule.  To our purpose, we extend this definition to fit for the discrete iterations.

\begin{Definition}\label{def1}
Let $D\subseteq \mathbb{C}^{n\times m}$ and $F:D\times D\rightarrow D$ be a binary matrix operator. We call that an iteration
\begin{equation}\label{eqF3}
X_{k+1}=F(X_k,X_1), \quad k\geq 1,
\end{equation}
has the
 semigroup property if the operator $F$ satisfies the following associative rule:
\[
F(F(X,Y),Z)=F(X,F(Y,Z)),
\]
for any $X,Y$ and $Z$ in $D$.
\end{Definition}

With Definition~\ref{def1}, we claim step by step how to set up an accelerated technique by applying the following Sherman MorrisonWoodbury formula (SMWF) in our proofs.

\begin{Theorem}\cite{Bernstein2005}:\label{Schur}
Let $A$, $B$, $U$, and $V$ be matrices of size $n$. If $U$, $V$, and $V^{-1}\pm AU^{-1}B$ are nonsingular, then
$U\pm B V A$ is invertible and
\[
(U\pm B V A)^{-1}=U^{-1}\mp U^{-1}B(V^{-1}\pm AU^{-1}B)^{-1}AU^{-1}.
\]

\end{Theorem}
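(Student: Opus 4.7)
The plan is to prove the Sherman--Morrison--Woodbury formula by direct verification. I would introduce $\epsilon \in \{+1,-1\}$ so that the two signed cases can be treated simultaneously; the claim then reduces to showing that, under the stated invertibility hypotheses on $U$, $V$, and $V^{-1}+\epsilon AU^{-1}B$,
\[
(U+\epsilon BVA)^{-1} = U^{-1} - \epsilon U^{-1}B\bigl(V^{-1}+\epsilon AU^{-1}B\bigr)^{-1}AU^{-1}.
\]
Denoting the candidate inverse on the right by $N$, the strategy is to form the product $(U+\epsilon BVA)N$, expand, and collapse it to the identity.

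After expansion, four terms arise: $UU^{-1}=I$, the cross term $\epsilon BVAU^{-1}$, and two terms carrying the factor $(V^{-1}+\epsilon AU^{-1}B)^{-1}$ on the right. I would group the latter two as $-\epsilon\bigl[B+\epsilon BVAU^{-1}B\bigr]\bigl(V^{-1}+\epsilon AU^{-1}B\bigr)^{-1}AU^{-1}$. The key algebraic step is the factorization
\[
B + \epsilon BVAU^{-1}B \;=\; BV\bigl(V^{-1}+\epsilon AU^{-1}B\bigr),
\]
which exploits the hypothesis that $V$ is invertible. Substituting this identity cancels the inverse factor cleanly, yielding $-\epsilon BVAU^{-1}$, which annihilates the earlier cross term $\epsilon BVAU^{-1}$ and leaves $I$. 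Once $(U+\epsilon BVA)N = I$ is established, I conclude that $U+\epsilon BVA$ is invertible with inverse $N$ by the standard fact that a one-sided inverse of a square matrix is two-sided; alternatively, the companion identity $N(U+\epsilon BVA)=I$ can be verified by a symmetric computation to bypass any appeal to that fact.

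The main obstacle is not conceptual but organizational: one must track the $\pm$ signs meticulously, keep the non-commuting factors in the correct order, and invoke each invertibility hypothesis precisely where it is needed, namely $U^{-1}$ to define $N$ at all, $V^{-1}$ to perform the pivotal factorization above, and $(V^{-1}+\epsilon AU^{-1}B)^{-1}$ to execute the final cancellation. No deeper machinery is required beyond associativity of matrix multiplication.
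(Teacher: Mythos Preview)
Your direct-verification argument is correct and is the standard proof of the Sherman--Morrison--Woodbury formula. The paper does not supply its own proof of this theorem at all: it is stated with a citation to Bernstein's \emph{Matrix Mathematics} and invoked as a known tool, so there is nothing to compare against beyond noting that your approach matches the textbook one.
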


Before, we go further to the investigation of accelerated techniques. We consider one type of binary operator $F$,
which can be viewed as a matrix representation of Example~\ref{simit}, to demonstrate the existence of an operator which satisfies the requirement given in Definition~\ref{def1}.

\begin{example}
Let $A$ be an arbitrary matrix with size $n\times n$.
For any three $n$-square matrices $X$, $Y$ and $Z$. Let $\Delta_{X,Y}=(A+X+Y)^{-1}$ if $A+X+Y$ is nonsingular and let $F$ be a binary matrix function defined by
\[F(X,Y)=X\Delta_{X,Y}Y.\]
With the aid of the {Sherman Morrison Woodbury formula}, it can be shown that
\begin{align*}
 \Delta_{X,F(Y,Z)}
 &=\Delta_{X,Y}+\Delta_{X,Y}Y\Delta_{F(X,Y),Z}(A+Y)\Delta_{X,Y},\\
 \Delta_{F(X,Y),Z}&=\Delta_{Y,Z}+\Delta_{Y,Z}(A+Y)\Delta_{X,F(Y,Z)}Y\Delta_{Y,Z},
\end{align*}
if either $\Delta_{X,F(Y,Z)}$ or $\Delta_{F(X,Y),Z}$ exists and $A+X+Y$ and $A+Y+Z$ are both nonsingular. Thus,
\begin{align*}
&(A+X+Y)\Delta_{X,F(Y,Z)}Y=(\Delta_{X,Y}^{-1}\Delta_{X,F(Y,Z)})Y\\
&=(I+Y\Delta_{F(X,Y),Z}(A+Y)\Delta_{X,Y})Y=Y\Delta_{F(X,Y),Z}(A+F(X,Y)+(A+Y)\Delta_{X,Y}Y+Z)\\
&=Y\Delta_{F(X,Y),Z}(A+X\Delta_{X,Y}Y+(A+Y)\Delta_{X,Y}Y+Z)=Y\Delta_{F(X,Y),Z}(A+Y+Z),
\end{align*}
or $\Delta_{X,F(Y,Z)}Y\Delta_{Y,Z}=\Delta_{X,Y}Y\Delta_{X,F(Y,Z)}$.
Then, the iteration $X_{k+1}=F(X_k,X_1)$ has the semigroup property since
\begin{align*}
F(F(X,Y),Z)= X\Delta_{X,Y}Y\Delta_{F(X,Y),Z}Z=X\Delta_{X,F(Y,Z)}Y\Delta_{Y,Z}Z=F(X,F(Y,Z)),
\end{align*}
for appropriate matrices $X$, $Y$, and $Z$.

\end{example}

Next,  we prove in general that if a sequence given by an operator has the semigroup property, the iterations of the sequence must follow a ``discrete flow".

\begin{Theorem}\label{thm:flow}
Let $\{X_k\}$ in~\eqref{eqF3} be a sequence with the semigroup property. Then the sequence $\{X_k\}$ satisfies the following ``discrete flow property'':
\begin{align}\label{eqflow}
X_{i+j}=F(X_i,X_j),\quad i, j \geq 1.
\end{align}
\end{Theorem}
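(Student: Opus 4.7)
The plan is to prove the discrete flow identity by a straightforward induction, fixing $i \geq 1$ and inducting on $j \geq 1$. The base case $j=1$ is immediate: by the defining recursion \eqref{eqF3} we have $X_{i+1} = F(X_i, X_1)$, which is exactly the claim for $j=1$.

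For the inductive step, I would assume the identity $X_{i+j} = F(X_i, X_j)$ and try to derive $X_{i+j+1} = F(X_i, X_{j+1})$. Starting from the recursion, $X_{i+j+1} = F(X_{i+j}, X_1)$. Substituting the inductive hypothesis gives $X_{i+j+1} = F\bigl(F(X_i, X_j), X_1\bigr)$. Now I invoke the semigroup property from Definition~\ref{def1} with the triple $(X_i, X_j, X_1)$ to obtain
\[
F\bigl(F(X_i, X_j), X_1\bigr) = F\bigl(X_i, F(X_j, X_1)\bigr).
\]
Finally, since $F(X_j, X_1) = X_{j+1}$ by \eqref{eqF3}, the right-hand side equals $F(X_i, X_{j+1})$, closing the induction.

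The argument is essentially mechanical; the only subtlety worth noting is that the semigroup identity must be applicable to the specific triple $(X_i, X_j, X_1)$, which is guaranteed because $F: D\times D \to D$ ensures every iterate lies in $D$, where the associative rule of Definition~\ref{def1} is assumed to hold. In particular, there is no domain obstruction, and no additional hypotheses (such as invertibility of auxiliary matrices appearing in specific examples) are needed at this level of abstraction. I do not foresee a genuine obstacle; the lemma is effectively a restatement of associativity in the language of the orbit of the iteration.
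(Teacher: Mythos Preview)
Your proof is correct and in fact slightly cleaner than the paper's. Both arguments proceed by induction on $j$ and invoke associativity, but the paper first establishes the auxiliary identity $X_{i+1} = F(X_1, X_i)$ (i.e., the arguments may be swapped when one of them is the seed $X_1$), and only then runs the main induction: it writes $X_{i+s+1} = F(X_{i+1}, X_s)$ via the hypothesis at level $s$, expands $X_{i+1} = F(X_i, X_1)$, applies associativity to get $F(X_i, F(X_1, X_s))$, and finally needs the swapping lemma to identify $F(X_1, X_s)$ with $X_{s+1}$. You bypass this detour entirely: starting from $X_{i+j+1} = F(X_{i+j}, X_1)$ and applying associativity, the inner factor is $F(X_j, X_1) = X_{j+1}$ directly by the defining recursion, so no preliminary swap lemma is required. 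Your route is shorter; the paper's route yields as a by-product the commutativity of the iterates with the seed, but that fact is not used elsewhere in the paper.
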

\begin{proof}
Upon using the principle of mathematical induction, we divide our proof in two steps. First, when $j=1$, we show that $X_{i+1}=F(X_1,X_i)$.  It is clear for $i = 1$, the formula~\eqref{eqflow} holds. Assume that the formula is true for $i=s$. Then,
      \begin{align*}
        X_{s+2} =F(X_{s+1},X_1)=F(F(X_{s},X_1),X_1)=F(F(X_{1},X_{s}),X_1)=F(X_1,X_{s+1}),
      \end{align*}
which completes the proof of the first part.
Second, assume that $X_{i+j}=F(X_i,X_j)$ holds for $j=s$. Then,
\begin{align*}
  X_{i+s+1}&=F(X_{i+1},X_s)=F(F(X_{i},X_1),X_s)=F(X_i,X_{s+1}).
\end{align*}
\end{proof}

Note that Theorem~\ref{thm:flow} provides an intrinsic characterization of an iteration with semigroup property. We now illustrate how the satisfaction of the semigroup property can give rise to a new accelerated iteration based on the original iteration. Below we assume that $F$ is an operator, which provides with the situation that iteration~\eqref{eqF3} has the semigroup property. Based on Theorem~\ref{thm:flow}, we propose the following algorithm for computing the iteration~\eqref{eqF3} and show that the speed of convergence can be any desired order later.

\begin{Algorithm}\label{aa2}
{\emph{(The accelerated algorithm for computing $X_{k+1}=F(X_k,X_1)$)}}
\begin{enumerate}

\item {Given a positive integer $r>1$, let $\widehat{X}_1=X_1$;}

\item {For} $k= 1,\ldots,$ iterate
 \begin{align*}
\widehat{X}_{k+1}& =F(\widehat{X}_k,X_{k}^{(r-1)}),
\end{align*}
    until convergence, where $X_{k}^{(r-1)}$ is defined in step 3.
\item
     {For} $\ell=1,\ldots,r-2$, iterate
   \begin{align*}
X_{k}^{(\ell+1)}& =F(\widehat{X}_k,X_{k}^{(\ell)}),
\end{align*}
with $X_{k}^{(1)}=\widehat{X}_k$.
 \end{enumerate}
\end{Algorithm}

For clarity, we list in Table~\ref{Tab1} the results of the iterations obtained by Algorithm~\ref{aa2}.
\begin{table}[h!!!]
  \centering
  \begin{tabular}{c|cccc}\hline
    $k$ & 1 &  2 & $\cdots$ &$m$ \\
    \hline \rule{0pt}{2.3ex}
  $ \widehat{X}_k$ & $X_1$ &  $X_r$& $\cdots$ & $X_{r^{m-1}}$ \\
  $X_k^{(\ell)}$ & $X_\ell$ &  $X_{\ell r}$& $\cdots$ & $X_{\ell r^{m-1}}$ \\
        \hline
  \end{tabular}
      \caption{Records of iterations by Algorithm~\ref{aa2} with
  $1\leq \ell \leq r-1$ and $k = 1,\ldots, m$.}
\label{Tab1}
\end{table}
%
%
%
%
This implies that if the sequence $\{{X}_k\}$ converges R-linearly to $X_\ast$ and $\|X_k-X^\ast\|=o(\rho^k)$ for a constant $0<\rho<1$, we have $\|\widehat{X}_k-X_\ast\|=o(\rho^{r^k})$. In other words, the
sequence $\{\widehat{X}_k\}$ will converges R-superlinearly to $X_\ast$ with order $r$ provided that the sequence $\{{X}_k\}$ converges R-linearly to $X_\ast$.

Specifically, We see that if $r=2$, Algorithm~\ref{aa2} reduces to the so-called ``doubling algorithm''
\begin{align*}
 \widehat{X}_{k+1}=F(\widehat{X}_k,\widehat{X}_k),
\end{align*}
and  $\widehat{X}_{k}$ converges to $X^\ast$ R-quadratically. Similarly, once $r = 3$, Algorithm~\ref{aa2} reduces to the so-called ``tripling algorithm''
\begin{align*}
 \widehat{X}_{k+1}=F(\widehat{X}_k,F(\widehat{X}_k,\widehat{X}_k))=F(F(\widehat{X}_k,\widehat{X}_k),\widehat{X}_k),
\end{align*}
and
$\widehat{X}_{k}$ converge to $X_\ast$ R-cubically.
%

%
%
%
%
%
%
%
%

%
%
%
%
\section{Real world problems}
We must emphasize that the applications of our way depend on the notion of what people want and don't have it. For us, this work is to discover a characteristic to speed up the standard iterative ways. The possible real-life examples and numerical experiments are referred to the ones given in~\cite{Smith1968, Zhou2009, Chiang2014, Lin2015, Chiang2016, Chiang2017, Lin2018a, Lin2018b}, while a much heavier demonstration is carried out. Unlike the existing results, we demonstrate these examples concerning the insight of the semigroup property.  We show that once the semigroup property is satisfied, our accelerated technique is available immediately by the employment of Algorithm~\ref{aa2}.
%
%
%
%
%
%

\begin{example}
Consider the Stein matrix equation
\begin{align}\label{Stein}
X=F(X):=AXB+C,
\end{align}
where $A\in\mathbb{C}^{m\times m}$, $B\in\mathbb{C}^{n\times n}$, $C\in\mathbb{C}^{m\times n}$ are known matrices and $X\in \mathbb{C}^{m\times n}$  is a matrix to be determined.
Eq.~\eqref{Stein} represents a model commonly encountered in the applications of control theory~\cite{Lancaster95} and is well-known to the numerical linear algebra community.
%
Suppose that $\rho(A)\rho(B)<1$, then the unique solution
\[
X_* = \sum_{i=0}^\infty A^i C B^i
\]
 to~\eqref{Stein} exists. While applying the fixed-point iteration to~\eqref{Stein}, we see that
\[
X=F^{(k)}(F(X))=F^{(k+1)}(X):=A_{k+1} X B_{k+1}+C_{k+1},
\]
or
\begin{subequations}\label{Smith}
\begin{align}
A_{k+1}&=A_k A_1,\\
B_{k+1}&=B_1 B_k,\\
C_{k+1}&=C_k+A_k C_1 B_k,
\end{align}
\end{subequations}
with the initial value $(A_1,B_1,C_1)=(A,B,C)$. Note that the iteration~\eqref{Smith} is so-called the Smith iteration~\cite{Smith1968} and the speed of convergence can be denoted by
\[
\limsup\limits_{k\rightarrow\infty}\sqrt[k]{\|C_k-X_*\|}\leq \rho(A)\rho(B).
\]

Like~\eqref{eqF2}, we extend the domain of $F$ and consider the mapping $F$ as an action defined by
 \begin{align}\label{type1}
  F(X_{a},X_{b}):=\bb A_{a}A_{b} \\  B _{b}B_a \\
  C_{a}+A_{a}C_{b}B_a\eb,
  \end{align}
where $X_i=[ A_i, B_i, C_{i}] ^\top$,
 for $i = a, b$, lies in the domain of $F$.
Using~\eqref{type1}, the iterations of~\eqref{Smith} can be rewritten as
  \begin{align*}
  X_{k+1}=F(X_{k},X_{1}).
  \end{align*}
Note that the property of~\eqref{eqflow} is satisfied because
  \begin{align*}
&F(F(X_{a},X_{b}),X_{c}) =\bb (A_{a}A_{b})A_{c} \\ B_{c}(B _{b}B_a) \\
 (C_{a}+A_{a}C_{b}B_a)+ (A_aA_b)C_c(B_b B_a)\eb\\
&=\bb A_{a}(A_{b}A_{c}) \\ (B_{c}B _{b})B_a \\
 C_{a}+A_{a}(C_{b}+ A_b C_c B_b) B_a \eb
=F(X_{a},F(X_{b},X_{c})).
\end{align*}
%
This gives rise to the fact that using Algorithm~\ref{aa2} to solve~\eqref{Stein} yields the
$r$-Smith iteration~\cite{Zhou2009}:
\begin{subequations}\label{rSmith}
\begin{align}
\widehat{A}_{k+1}&=\widehat{A}_k^r,\\
\widehat{B}_{k+1}&=\widehat{B}_k^r,\\
\widehat{C}_{k+1}&=\sum\limits_{\ell=0}^{r-1}\widehat{A}_k^\ell \widehat{C}_k \widehat{B}_k^\ell.
\end{align}
\end{subequations}
with the initial value $(\widehat{A}_1,\widehat{B}_1,\widehat{C}_1)=(A,B,C)$ and the speed of convergence is $R$-superlinear with order $r$, i.e.,
\[
\limsup\limits_{k\rightarrow\infty}\sqrt[r^k]{\|\widehat{C}_k-X_*\|}\leq \rho(A)\rho(B).
\]

\end{example}

%
%
%
%

\begin{example}
 Given a regular $n\times n$ matrix pencil $A- \lambda B$ (i.e., $\det(A-\lambda B)$ is not identically zero for all $\lambda$) and an integer $m\leq n$, we want to find in this example a {full rank matrix} $U\in\mathbb{C}^{n\times m}$ such that
\begin{equation*}
A U = B U  \Lambda,
\end{equation*}
where $\Lambda\in\mathbb{C}^{m\times m}$ and {$\rho(\Lambda) < 1$}. The column space of $U$ is called a stable subspace of the matrix pencil $A- \lambda B$. It is well known that a class of numerical methods for solving some matrix equations can be reduced to the computation of a kind of stable subspace of a suitable matrix pencil~\cite{Bini2012,Lancaster95,Huang2018}.

To this end, let $\Delta_{1,k}:=(A_1+B_{k})^{-1}$ and consider the following iterations,
\begin{subequations}\label{ex3fix}
\begin{align}
  A_{k+1}&=A_1\Delta_{1,k}A_{k}=A_{k}-B_{k}\Delta_{1,k}A_{k},\\
  B_{k+1}&=B_{k}\Delta_{1,k}B_{1}=B_{1}-A_{1}\Delta_{1,k}B_{1},
\end{align}
\end{subequations}
for $k\geq 1$. It has been shown in~\cite{Lin2018b} that
if the sequence defined by~\eqref{ex3fix} exists and $A_1U = B_1U\Lambda$, then  $A_k U= B_k U \Lambda^k$.
This implies that
 if $\rho (\Lambda)< 1$, and if the sequence $\{B_k\}$  is uniformly bounded, then $\lim\limits_{k\rightarrow \infty} A_k U = 0$. To solve the solution $U$ is equal to compute the right null space of $A_\infty$, where $A_\infty := \lim\limits_{k\rightarrow \infty} A_k$. Our subsequence discussion is to show that how Algorithm~\ref{aa2} can be directly applied to~\eqref{ex3fix} to accelerate the iterations.
 A similar but more complex discussion can be found in~\cite[Section~2]{Lin2018b}.

To start with, let $a$ and $b$ be two dummy indices such that  
$X_{a}=\bb A_{a} \\ B_{a}\eb$, $\Delta_{a,b} =(A_{a}+B_{b})^{-1}$, and
  \begin{align}\label{type1}
  F(X_{a},X_{b})=\bb A_{a}\Delta_{a,b}A_{b} \\ B_{b}\Delta_{a,b}B_{a}\eb=\bb A_{b}-B_{b}\Delta_{a,b}A_{b}\\B_{a}-A_{a}\Delta_{a,b}B_{a} \eb,
  \end{align}
  are well-defined,
  that is, $X_{k+1}=F(X_{k},X_{1})$ for $k\geq 1$.  Similarly, let $X_{d}=F(X_{a},X_{b})$, $X_{e}=F(X_{b},X_{c})$, $X_{\ell}=F(X_d,X_{c})$ and $X_{r}=F(X_{a},X_{e})$. We see that
\begin{align*}
 \Delta_{d,c}&=(A_{d}+B_{c})^{-1}=(A_{b}+B_{c}-B_{b}{\Delta_{a,b}}A_{b})^{-1}\\
 &=
 {\Delta_{b,c}+\Delta_{b,c}B_{b}\Delta_{a,e}A_{b}\Delta_{b,c},}\\
 \Delta_{a,e}&=(A_{a}+B_{e})^{-1}=(A_{a}+B_{b}-A_{b}\Delta_{b,c}B_{b})^{-1}\\
 &=\Delta_{a,b}+\Delta_{a,b}A_{b}\Delta_{d,c}B_{b}\Delta_{a,b},\\
 \Delta_{a,b}^{-1}\Delta_{a,e}&=(A_{a}+B_{e}+A_{b}\Delta_{b,c} B_{b})(A_{a}+B_{e})^{-1}=I_n+A_{b}\Delta_{b,c} B_{b} \Delta_{a,e},\\
 \Delta_{a,e}\Delta_{a,b}^{-1}&=(A_{a}+B_{e})^{-1}(A_{a}+B_{e}+A_{b}\Delta_{b,c} B_{b})=I_n+\Delta_{a,e}A_{b}\Delta_{b,c}B_{b}.
  \end{align*}
Furthermore,
 \begin{align*}
  \Delta_{a,b} A_{b} \Delta_{d,c}&=\Delta_{a,b}A_{b}\Delta_{b,c}+\Delta_{a,b}A_{b}\Delta_{b,c}B_{b}\Delta_{a,e}A_{b}\Delta_{b,c}\\
  &=\Delta_{a,b}(I_n+A_{b}\Delta_{b,c}B_{b}\Delta_{a,e})A_{b}\Delta_{b,c}\\
  &=\Delta_{a,e} A_{b} \Delta_{b,c},\\
 \Delta_{d,c}B_{b}\Delta_{a,b}&=\Delta_{b,c}B_{b}\Delta_{a,b}+\Delta_{b,c}B_{b}\Delta_{a,e}A_{b}\Delta_{b,c}B_{b}\Delta_{a,b}\\
  &=\Delta_{b,c}B_{b}(I_n+\Delta_{a,e}A_{b}\Delta_{b,c}B_{b})\Delta_{a,b}\\
  &=\Delta_{b,c} B_{b} \Delta_{a,e}.
  \end{align*}
After these preliminaries, we can state that
\begin{align*}
{F(F(X_{a},X_{b}),X_{c})=F(X_{a},F(X_{b},X_{c}))}
\end{align*}
by the following observation
\begin{align*}
 A_{\ell}&=A_{d} \Delta_{d,c} A_{c}=(A_{a} \Delta_{a,b} A_{b}) \Delta_{d,c}A_{c} =A_{a} (\Delta_{a,b} A_{b} \Delta_{d,c}) A_{c}\\
 &=A_{a} (\Delta_{a,e} A_{b}\Delta_{b,c} ) A_{c}=A_{a} \Delta_{a,e} (A_{b}\Delta_{b,c}  A_{c})=A_{a} \Delta_{a,e} A_{e}=A_r,\\
B_{\ell                                                                                                                                         }&=B_{c} \Delta_{d,c} B_{d}= B_{c}\Delta_{d,c}(B_{b}\Delta_{a,b} B_{a})  =B_{c} (\Delta_{d,c} B_{b} \Delta_{a,b}) B_{a}\\
&=B_{c} (\Delta_{b,c} B_{b} \Delta_{a,e}) B_{a}=(B_{c} \Delta_{b,c} B_{b}) \Delta_{a,e} B_{a}=B_{e} \Delta_{a,e} B_{a}=B_r.
\end{align*}
That is, an accelerated technique is available by applying Algorithm~\ref{aa2}.
More details on accelerated algorithm can be found in~\cite{Lin2018b}.

\end{example}

%
%
%
%
%
\begin{example}
In this example, we consider a kind of nonlinear matrix equations~\cite{Bini2012,Huang2018,Chiang2017}
\begin{align}\label{eq:NME}
X=N(X):=Q-AX^{-1}B,
\end{align}
where $A$, $B$, and $Q$ are $n\times n$ matrices.
If $X$ is a solution of~\eqref{eq:NME},
we see that for $k\geq 1$,
\begin{align*}
X= N^{(k)}(N(X)) =N^{(k+1)}(X):=Q_{k+1}-A_{k+1}(X-P_{k+1})^{-1}B_{k+1},
 \end{align*}
with matrices $A_k$, $B_k$, $Q_k$ and $P_k$ defined by
  \begin{subequations}\label{ex4fix}
 \begin{align}
A_{k+1}&:=A_{k}\Delta_{1,k}A_1,\\
B_{k+1}&:=B_1\Delta_{1,k}B_{k},\\
P_{k+1}&:=P_1+B_1\Delta_{1,k}A_1,\\
Q_{k+1}&:=Q_{k}-A_{k} \Delta_{1,k} B_{k},
  \end{align}
  \end{subequations}
where $\Delta_{1,k}:=(Q_1-P_{k})^{-1}$ and initial matrices $A_1=A$, $B_1=B$, $P_1=0$, and  $Q_1=Q$. This is because
\begin{align*}
&Q_{k+1}-A_{k+1}(X-P_{k+1})^{-1}B_{k+1}=N^{(k+1)}(X) = N^{(k)}(N(X))\\
 &=Q_{k}-A_{k}(Q_1-P_{k}-A_1(X-P_1)^{-1}B_1)^{-1}B_{k}\\
 &=Q_{k}-A_{k}(\Delta_{1,k}+\Delta_{1,k}A_1(X-P_1-B_1\Delta_{1,k}A_1)^{-1}B_1\Delta_{1,k})B_{k}.
 \end{align*}

The reader is referred to~\cite{Chiang2017} for a thorough discussion of a class of NMEs~\eqref{eq:NME}, where $A= B^H$. It can be proved that the limit of
$Q_k$ is exactly equal to a solution of~\eqref{eq:NME} under some additional assumptions on the coefficients $Q$, $A$ and $B$.

Our goal here is to show that the iteration defined by~\eqref{ex4fix} can be advanced by showing that the semigroup property holds for the mapping
  \begin{align}\label{type1NME}
  F(X_{a},X_{b})=\bb A_{b}\Delta_{a,b}A_{a} \\ B_{a}\Delta_{a,b}B_{b} \\P_{a}+B_{a}\Delta_{a,b}A_{a} \\ Q_{b}-A_{b}\Delta_{a,b}B_{b}\eb,
  \end{align}
where
$X_{a}=\bb A_{a} \\ B_{a} \\ P_{a} \\Q_{a} \eb$ and $\Delta_{a,b}=(Q_{a}-P_{b})^{-1}$ for any two dummy indices $a$ and $b$. That is,  for $k\geq 1$,
  \begin{align*}
  X_{k+1}= F(X_{k},X_{1}).
  \end{align*}
To show this desired property, we have the following preliminary result by applying the SMWF.

\begin{Lemma}\label{lem:NME}
Let $X_{d}=F(X_{a},X_{b})$ and $X_{e}=F(X_{b},X_{c})$. Then
\begin{align*}
\begin{array}{ll}
 \rm{(i)} & \Delta_{a,e}=\Delta_{a,b}+\Delta_{a,b}B_b\Delta_{d,c}A_b\Delta_{a,b},\\
 \rm{(ii)} &
 \Delta_{d,c}=\Delta_{b,c}+\Delta_{b,c}A_b\Delta_{a,e}B_b\Delta_{b,c},\\
 \rm{(iii)} & \Delta_{a,b} B_b \Delta_{d,c}=\Delta_{a,e} B_b \Delta_{b,c},\\
 \rm{(iv)}  & \Delta_{d,c} A_b \Delta_{a,b}=\Delta_{b,c} A_b \Delta_{a,e}.
 \end{array}
 \end{align*}

\end{Lemma}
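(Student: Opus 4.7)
The plan is to reduce all four identities in Lemma~\ref{lem:NME} to applications of the Sherman--Morrison--Woodbury formula (Theorem~\ref{Schur}) together with a short algebraic cancellation. The key preliminary observation, obtained by reading off the third and fourth components of $F$ in~\eqref{type1NME}, is that $P_e = P_b + B_b \Delta_{b,c} A_b$ and $Q_d = Q_b - A_b \Delta_{a,b} B_b$. Consequently,
$$\Delta_{a,e}^{-1} = Q_a - P_e = \Delta_{a,b}^{-1} - B_b \Delta_{b,c} A_b, \qquad \Delta_{d,c}^{-1} = Q_d - P_c = \Delta_{b,c}^{-1} - A_b \Delta_{a,b} B_b,$$
so each of $\Delta_{a,e}^{-1}$ and $\Delta_{d,c}^{-1}$ is a rank-type perturbation of an invertible ``base'' matrix, precisely the setup SMWF requires.

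For (i), I would apply Theorem~\ref{Schur} to $\Delta_{a,e}^{-1}$ with $U = \Delta_{a,b}^{-1}$, $B = B_b$, $V = \Delta_{b,c}$, $A = A_b$; after expansion, the inner bracket $V^{-1} - AU^{-1}B$ is precisely $\Delta_{b,c}^{-1} - A_b \Delta_{a,b} B_b = \Delta_{d,c}^{-1}$, and (i) follows immediately. Identity (ii) is the symmetric statement and comes out of the analogous SMWF application to $\Delta_{d,c}^{-1}$, now using $U = \Delta_{b,c}^{-1}$ as the base.

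Identities (iii) and (iv) will not need a separate SMWF call; I plan to derive (iii) by substituting (i) into its right-hand side $\Delta_{a,e} B_b \Delta_{b,c}$ and then factoring out $\Delta_{a,b} B_b \Delta_{d,c}$. The crucial simplification is the elementary rewrite
$$\Delta_{d,c}^{-1} + A_b \Delta_{a,b} B_b = \Delta_{b,c}^{-1},$$
which is immediate from the expression for $\Delta_{d,c}^{-1}$ above; this collapses the product to $\Delta_{a,b} B_b \Delta_{d,c}$. A mirror manipulation starting from (ii) yields (iv).

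The main obstacle is not conceptual but bookkeeping: one has to keep the five-index scheme $(a,b,c,d,e)$ straight, identify the correct block of $F(X_a,X_b)$ supplying each $P$- or $Q$-component, and apply SMWF with the correct sign when the perturbation enters subtracted rather than added. Once the two pivotal rewrites of $\Delta_{a,e}^{-1}$ and $\Delta_{d,c}^{-1}$ displayed above are in hand, each of the four identities costs at most one invocation of Theorem~\ref{Schur} plus one line of cancellation, so the proof should be short and essentially mechanical.
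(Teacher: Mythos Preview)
Your proposal is correct and essentially coincides with the paper's own argument: the paper likewise rewrites $\Delta_{a,e}^{-1}$ and $\Delta_{d,c}^{-1}$ as the perturbations $\Delta_{a,b}^{-1}-B_b\Delta_{b,c}A_b$ and $\Delta_{b,c}^{-1}-A_b\Delta_{a,b}B_b$, applies the Sherman--Morrison--Woodbury formula to obtain (i) and (ii), and then derives (iii) and (iv) by substituting one of these identities and collapsing the resulting bracket. The only cosmetic difference is that the paper substitutes (ii) into the left-hand side of (iii) and (iv) (using $\Delta_{a,b}^{-1}\Delta_{a,e}=I_n+B_b\Delta_{b,c}A_b\Delta_{a,e}$), whereas you substitute (i) into the right-hand side; the cancellations are the same computation read from opposite ends.
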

\begin{proof}
Observe that
\begin{align*}
  \Delta_{a,e}&=(Q_a-P_e)^{-1}=(Q_a-P_b-B_b\Delta_{b,c}A_b)^{-1}\\
 &=\Delta_{a,b}+\Delta_{a,b}B_b(Q_b-P_c-A_b\Delta_{a,b}B_b^{-1}A_b\Delta_{a,b},\\
 &=\Delta_{a,b}+\Delta_{a,b}B_b\Delta_{d,c}A_b\Delta_{a,b},\\
 \Delta_{d,c}&=(Q_d-P_c)^{-1}=(Q_b-P_c-A_b\Delta_{a,b}B_b)^{-1}\\
 &=\Delta_{b,c}+\Delta_{b,c}A_b(Q_a-P_b-B_b\Delta_{b,c}A_b)^{-1}A_b\Delta_{b,c},\\
 &=\Delta_{b,c}+\Delta_{b,c}A_b\Delta_{a,e}B_b\Delta_{b,c},
  \end{align*}
  which complete the proofs of part (i) and (ii).
Since
\begin{align*}
  \Delta_{a,b}^{-1}\Delta_{a,e}&=I_n+B_b\Delta_{b,c} A_b \Delta_{a,e},\\
  \Delta_{a,e}\Delta_{a,b}^{-1}&=I_n+\Delta_{a,e}B_b\Delta_{b,c}A_b,
  \end{align*}
 it follows that
  \begin{align*}
  \Delta_{a,b} B_b \Delta_{d,c}&=\Delta_{a,b}B_b\Delta_{b,c}+\Delta_{a,b}B_b\Delta_{b,c}A_b\Delta_{a,e}B_b\Delta_{b,c}\\
  &=\Delta_{a,b}(I_n+B_b\Delta_{b,c}A_b\Delta_{a,e})B_b\Delta_{b,c}\\
  &=\Delta_{a,e} B_b \Delta_{b,c},\\
  \Delta_{d,c}A_b\Delta_{a,b}&=\Delta_{b,c}A_b\Delta_{a,b}+\Delta_{b,c}A_b\Delta_{a,e}B_b\Delta_{b,c}A_b\Delta_{a,b}\\
  &=\Delta_{b,c}A_b(I_n+\Delta_{a,e}B_b\Delta_{b,c}A_b)\Delta_{a,b}\\
  &=\Delta_{b,c} A_b \Delta_{a,e}.
  \end{align*}

\end{proof}

The new identities established in Lemma~\ref{lem:NME} lead to the following result:
\begin{Theorem}\label{thm:nme}
For the matrix operator \eqref{type1NME}, we have
\begin{align*}
 F(F(X_a,X_b),X_c)=F(X_a,F(X_b,X_c)).
\end{align*}
\end{Theorem}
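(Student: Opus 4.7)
The plan is a block-by-block verification. Writing $X_d = F(X_a,X_b)$ and $X_e = F(X_b,X_c)$ as in Lemma~\ref{lem:NME}, the left-hand side $F(X_d,X_c)$ and right-hand side $F(X_a,X_e)$ each have four components (the $A$-, $B$-, $P$-, and $Q$-blocks) prescribed by~\eqref{type1NME}. So I would substitute the formulas
\[
A_d = A_b\Delta_{a,b}A_a,\; B_d = B_a\Delta_{a,b}B_b,\; P_d = P_a + B_a\Delta_{a,b}A_a,\; Q_d = Q_b - A_b\Delta_{a,b}B_b,
\]
and the analogous ones for $X_e$, into both expressions, and then show the four matching blocks coincide. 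Each matching uses exactly one of the four identities from Lemma~\ref{lem:NME}.

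For the $A$- and $B$-blocks the proof is a single rewrite. The $A$-block of $F(X_d,X_c)$ is $A_c\,\Delta_{d,c}A_b\Delta_{a,b}\,A_a$; applying identity (iv), $\Delta_{d,c}A_b\Delta_{a,b}=\Delta_{b,c}A_b\Delta_{a,e}$, turns it into $A_c\Delta_{b,c}A_b\Delta_{a,e}A_a$, which equals $A_e\Delta_{a,e}A_a$, the $A$-block of $F(X_a,X_e)$. Symmetrically, identity (iii) handles the $B$-blocks, converting $B_a\Delta_{a,b}B_b\Delta_{d,c}B_c$ into $B_a\Delta_{a,e}B_b\Delta_{b,c}B_c = B_a\Delta_{a,e}B_e$.

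For the $P$- and $Q$-blocks the identities (i) and (ii) do the work via a factor-and-recognize step. Expanding the $P$-block of $F(X_d,X_c)$ gives
\[
P_d + B_d\Delta_{d,c}A_d
= P_a + B_a\bigl[\Delta_{a,b} + \Delta_{a,b}B_b\Delta_{d,c}A_b\Delta_{a,b}\bigr]A_a,
\]
and the bracket is precisely $\Delta_{a,e}$ by identity (i), so the block equals $P_a + B_a\Delta_{a,e}A_a$, which is the $P$-block of $F(X_a,X_e)$. The $Q$-block is the mirror computation using identity (ii): expanding $Q_e - A_e\Delta_{a,e}B_e$ factors $A_c\bigl[\Delta_{b,c}+\Delta_{b,c}A_b\Delta_{a,e}B_b\Delta_{b,c}\bigr]B_c = A_c\Delta_{d,c}B_c$, reproducing the $Q$-block of $F(X_d,X_c)$.

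There is no real obstacle beyond bookkeeping — Lemma~\ref{lem:NME} has already absorbed all the analytic content (the Sherman--Morrison--Woodbury manipulations), so the theorem is a structural check that the four dummy-index identities align with the four blocks of $F$. The main care is to keep the subscripts $a,b,c,d,e$ straight and to consistently respect the noncommutativity so that the pairings $\Delta_{a,b}B_b\Delta_{d,c}$ versus $\Delta_{a,e}B_b\Delta_{b,c}$ appear exactly where needed.
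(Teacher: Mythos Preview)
Your proposal is correct and matches the paper's own proof essentially line for line: the paper likewise sets $X_d=F(X_a,X_b)$, $X_e=F(X_b,X_c)$ and verifies equality of the four blocks, using Lemma~\ref{lem:NME}(iv) for the $A$-block, (iii) for the $B$-block, (i) for the $P$-block, and (ii) for the $Q$-block. The only cosmetic difference is that the paper starts the $Q$-block computation from $Q_c-A_c\Delta_{d,c}B_c$ and expands via (ii), while you start from $Q_e-A_e\Delta_{a,e}B_e$; the substance is identical.
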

\begin{proof}
 Let $X_{d}=F(X_{a},X_{b})$, $X_{e}=F(X_{b},X_{c})$, $X_{\ell}=F(X_d,X_{c})$ and $X_{r}=F(X_{a},X_{e})$. The semigroup property follows from the following:
\begin{align*}
 A_\ell
  &=A_c \Delta_{d,c} A_d=A_c \Delta_{d,c} A_b \Delta_{a,b} A_a=A_c (\Delta_{d,c} A_b \Delta_{a,b}) A_a=A_c (\Delta_{b,c} A_b \Delta_{a,e}) A_a=A_e \Delta_{a,e} A_a=A_r,\\
B_\ell&=B_d \Delta_{d,c} B_c=B_a\Delta_{a,b} B_b \Delta_{d,c} B_c =
B_a(\Delta_{a,b} B_b \Delta_{d,c}) B_c
=
B_a (\Delta_{a,e} B_b \Delta_{b,c}) B_c=B_r,\\
P_\ell&=P_d+B_d\Delta_{d,c}A_d=P_a+B_a\Delta_{a,b}A_a+B_a\Delta_{a,b}B_b\Delta_{d,c}A_b\Delta_{a,b}A_a\\
&=P_a+B_a\Delta_{a,e}A_a=P_r,\\
Q_\ell&=Q_c-A_c\Delta_{d,c}B_c=
Q_c-A_c(\Delta_{b,c}+\Delta_{b,c}A_b\Delta_{a,e}B_b\Delta_{b,c})B_c\\
&=Q_e-A_e\Delta_{a,e}B_e=Q_r.
   \end{align*}
\end{proof}
From Theorem~\ref{thm:flow} and Theorem~\ref{thm:nme}, we can apply Algorithm~\ref{aa2} to provide an accelerated method. More details can be found in~\cite{Chiang2017}.

\end{example}

%
%
%
%
%

\begin{example}
Algebraic Riccati equations have been widely discussed in the files of control and engineering problems~\cite{Bini2012,Lancaster95}. In the last example, we would like to make use of the semigroup property to promptly solve the discrete-time algebraic Riccati equation~\cite{Huang2018,Lin2018a},
\begin{align}\label{eq:dare}
X=R(X):=H+A^H {X} (I+ G{X})^{-1} A,
\end{align}
where {$A\in\mathbb{C}^{n\times n}$, matrices $G$ and $H$ are two $n\times n$
 positive definite matrices, and $X$ is an unknown Hermitian matrix and to be determined}. Let $X$ be a solution of~\eqref{eq:dare}.
 It follows from a direct computation that
 \begin{align}\label{eq:dare2}
 X = R^{(k)}(R(X))= R^{(k+1)}(X) = H_{k+1}+A_{k+1}^H {X} (I+ G_{k+1}{X})^{-1} A_{k+1},
 \end{align}
 where $A_k$, $G_k$, and $H_k$, for $k = 1,2,\ldots$, are three matrices given by
\begin{subequations}\label{fixdare}
\begin{eqnarray}
A_{k+1} &=&
A_1\Delta_{G_{k},{H_1}}A_{k},\\
G_{k+1} &=&
G_1+A_1\Delta_{G_{k},{H_1}}G_{k}A_1^H,\\
H_{k+1} &=&
H_{k}+A_{k}^H H_1\Delta_{G_{k},{H_1}}A_{k},\end{eqnarray}
\end{subequations}
with respect to initial matrices $G_1=G$, $H_1=H$, $A_1=A$,
and
$\Delta_{G_{k},{H_1}} = I+G_{k}H_1$.
To see how the semigroup property is included in the iterations of ~\eqref{eq:dare2}, assume that~\eqref{fixdare} is well-defined. For any two dummy indices $a$ and $b$,
let
$X_{a}=\bb A_{a}\\G_{a}\\ H_{a} \eb$, $\Delta_{a,b}=(I+G_{a}H_{b})^{-1}$, and
  \begin{align}\label{type1b}
  F(X_{a},X_{b})=\bb A_{b}\Delta_{a,b}A_{a} \\ G_{b}+A_{b}\Delta_{a,b}G_{a}A_{b}^H \\
  H_{a}+A_{a}^H H_{b}\Delta_{a,b}A_{a}\eb,
  \end{align}
We thus have an equivalent expression of~\eqref{fixdare}, i.e.,
\begin{align*}
  X_{k+1}=F(X_{k},X_{1}), \quad k\geq 1.
  \end{align*}
It can be proved that the limit of
$H_k$ is exactly to equal to a solution of~\eqref{eq:dare} under some mild assumptions on the coefficients $A$, $G$, and $H$~\cite{Lin2018a}.

To show the satisfaction of the semigroup property, we require the following characteristics.
\begin{Lemma}\label{lem:CARE}
Let $X_{d}=F(X_{a},X_{b})$ and $X_{e}=F(X_{b},X_{c})$.  Then\
\begin{align*}
\begin{array}{ll}
 \rm{(i)} & \Delta_{a,e}=\Delta_{a,b}-\Delta_{a,b}(G_aA_b^H H_c)\Delta_{d,c}A_b\Delta_{a,b},\\
  \rm{(ii)} & \Delta_{d,c}=\Delta_{b,c}-\Delta_{b,c}A_b\Delta_{a,e}(G_aA_b^H H_c)\Delta_{b,c},\\
 \rm{(iii)} & \Delta_{a,b} (G_aA_b^H H_c) \Delta_{d,c}=\Delta_{a,e}  (G_aA_b^H H_c) \Delta_{b,c},\\
 \rm{(iv)} &\Delta_{d,c} A_b \Delta_{a,b}=\Delta_{b,c} A_b \Delta_{a,e}.
\end{array}
\end{align*}

\end{Lemma}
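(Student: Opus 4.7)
The strategy is essentially identical to the one that worked for Lemma~\ref{lem:NME}. Writing $M:=G_a A_b^H H_c$ for brevity, the first step is to record two ``resolvent'' expressions obtained by expanding the definitions of $H_e$ and $G_d$ in $\Delta_{a,e}^{-1}=I+G_a H_e$ and $\Delta_{d,c}^{-1}=I+G_d H_c$:
\[
\Delta_{a,e}^{-1}=\Delta_{a,b}^{-1}+M\Delta_{b,c}A_b,\qquad \Delta_{d,c}^{-1}=\Delta_{b,c}^{-1}+A_b\Delta_{a,b}M.
\]
These two identities follow at once from $H_e=H_b+A_b^H H_c\Delta_{b,c}A_b$ (multiplied on the left by $G_a$) and $G_d=G_b+A_b\Delta_{a,b}G_a A_b^H$ (multiplied on the right by $H_c$), and they are the engine of everything that follows.

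Parts (i) and (ii) then each fall out of a single application of the Sherman--Morrison--Woodbury formula (Theorem~\ref{Schur}). For (i), I invert $\Delta_{a,b}^{-1}+M\Delta_{b,c}A_b$ by taking $U=\Delta_{a,b}^{-1}$, $B=M$, $V=\Delta_{b,c}$, $A=A_b$ in SMWF; the key point is that the ``middle'' factor $V^{-1}+AU^{-1}B=\Delta_{b,c}^{-1}+A_b\Delta_{a,b}M$ is exactly $\Delta_{d,c}^{-1}$ by the second resolvent identity, so its inverse is $\Delta_{d,c}$ and (i) drops out immediately. Part (ii) is symmetric: with $U=\Delta_{b,c}^{-1}$, $B=A_b$, $V=\Delta_{a,b}$, $A=M$ in SMWF, the middle factor becomes $\Delta_{a,e}^{-1}$ by the first resolvent identity.

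For (iv), I multiply (ii) on the right by $A_b\Delta_{a,b}$ to get
\[
\Delta_{d,c}A_b\Delta_{a,b}=\Delta_{b,c}A_b\Delta_{a,b}-\Delta_{b,c}A_b\Delta_{a,e}(M\Delta_{b,c}A_b)\Delta_{a,b},
\]
then use the first resolvent identity in the form $M\Delta_{b,c}A_b=\Delta_{a,e}^{-1}-\Delta_{a,b}^{-1}$ so that the inner cluster telescopes as $\Delta_{a,e}(\Delta_{a,e}^{-1}-\Delta_{a,b}^{-1})\Delta_{a,b}=\Delta_{a,b}-\Delta_{a,e}$; substituting back cancels the first $\Delta_{b,c}A_b\Delta_{a,b}$ term and leaves $\Delta_{b,c}A_b\Delta_{a,e}$, which is (iv). Part (iii) is the mirror computation: start from (i), multiply on the right by $M\Delta_{b,c}$, and then telescope $\Delta_{d,c}(\Delta_{d,c}^{-1}-\Delta_{b,c}^{-1})\Delta_{b,c}=\Delta_{b,c}-\Delta_{d,c}$ using the second resolvent identity.

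The only real obstacle I expect is bookkeeping --- picking the correct factorization $(U,B,V,A)$ in each SMWF step so that $V^{-1}+AU^{-1}B$ collapses precisely to one of the two resolvents $\Delta_{a,e}^{-1}$ or $\Delta_{d,c}^{-1}$, and keeping track of which side each of $A_b$, $M$, $H_c$ and $G_a$ sits on (since we no longer have the symmetric $B=A^H$ setting of~\eqref{eq:NME}). Once that choice is made, the rest of the lemma is a short algebraic exercise that parallels the NME case already completed in Lemma~\ref{lem:NME}.
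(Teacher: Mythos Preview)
Your proposal is correct and follows essentially the same route as the paper: both proofs reduce (i) and (ii) to a single application of SMWF after expanding $H_e$ and $G_d$, and both obtain (iii)--(iv) by substituting one of (i)/(ii) and then collapsing the resulting expression via the resolvent-type identities $\Delta_{a,e}^{-1}=\Delta_{a,b}^{-1}+M\Delta_{b,c}A_b$ and $\Delta_{d,c}^{-1}=\Delta_{b,c}^{-1}+A_b\Delta_{a,b}M$. The only cosmetic difference is that the paper derives (iii) and (iv) both from (ii) by first recording $\Delta_{a,b}^{-1}\Delta_{a,e}=I-M\Delta_{b,c}A_b\Delta_{a,e}$ and its companion, whereas you telescope more directly from (i) for (iii) and from (ii) for (iv); your route is marginally cleaner but carries the same content.
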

\begin{proof}
The proofs of the first two results follow trivially by the SWMF, i.e.,
 \begin{align*}
 \Delta_{a,e}&=(I_n+G_aH_e)^{-1}=(I_n+G_aH_b+G_aA_b^H H_c\Delta_{b,c}A_b)^{-1}\\
 &=\Delta_{a,b}-\Delta_{a,b}G_aA_b^H H_c(I+G_bH_c+A_b{\Delta_{b,c}}G_aA_b^H H_c)^{-1}A_b\Delta_{a,b},\\
 &=\Delta_{a,b}-\Delta_{a,b}(G_aA_b^H H_c)\Delta_{d,c}A_b\Delta_{a,b},\\
 \Delta_{d,c}&=(I_n+G_dH_c)^{-1}=(I_n+G_bH_c+A_b\Delta_{a,b}G_aA_b^H H_c)^{-1}\\
 &=\Delta_{b,c}-\Delta_{b,c}A_b(I_n+G_aH_b+G_aA_b^H H_c\Delta_{b,c}A_b)^{-1}G_aA_b^H H_c\Delta_{b,c},\\
 &=\Delta_{b,c}-\Delta_{b,c}A_b\Delta_{a,e}(G_aA_b^H H_c)\Delta_{b,c}.
\end{align*}
Also, it can be seen that
   \begin{align*}
  \Delta_{a,b}^{-1}\Delta_{a,e}&=I_n-G_a A_b^H H_c\Delta_{b,c} A_b \Delta_{a,e} ,\\
  \Delta_{a,e}\Delta_{a,b}^{-1}&=I_n-\Delta_{a,e}G_a A_b^H H_c\Delta_{b,c} A_b.
  \end{align*}
It follows that
  \begin{align*}
  \Delta_{a,b} [G_a A_b^H H_c] \Delta_{d,c}&=\Delta_{a,b} (G_aA_b^H H_c]\Delta_{b,c}-\Delta_{a,b}(G_aA_b^H H_c)\Delta_{b,c}A_b\Delta_{a,e}(G_aA_b^H H_c)\Delta_{b,c}\\
  &=\Delta_{a,b}\left [(I_n+G_aH_e)-G_aA_b^H H_c\Delta_{b,c}A_b \right] \Delta_{a,e} (G_aA_b^H H_c)\Delta_{b,c}\\
    &{=\Delta_{a,b}\left [I_n+ G_a(H_e-A_b^H H_c\Delta_{b,c}A_b) \right] \Delta_{a,e} (G_aA_b^H H_c)\Delta_{b,c}
    }
    \\
  &=\Delta_{a,e} (G_aA_b^H H_c)  \Delta_{b,c},\\
{ \Delta_{d,c}A_b\Delta_{a,b}}&=\Delta_{b,c}A_b\Delta_{a,b}-\Delta_{b,c}A_b\Delta_{a,e}(G_aA_b^H H_c)\Delta_{b,c}A_b\Delta_{a,b}\\
  &=\Delta_{b,c}A_b(I_n-\Delta_{a,e}(G_aA_b^H H_c)\Delta_{b,c}A_b)\Delta_{a,b}\\
  &=\Delta_{b,c} A_b {\Delta_{a,e}}
  \end{align*}
\end{proof}

The following theorem follows from the application of Lemma~\ref{lem:CARE}.
\begin{Theorem}\label{thm:care}
For the matrix operator \eqref{type1b}, we have
\begin{align*}
 F(F(X_a,X_b),X_c)=F(X_a,F(X_b,X_c)).
\end{align*}
\end{Theorem}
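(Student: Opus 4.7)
The plan is to mirror the proof of Theorem~\ref{thm:nme}. I would introduce the intermediate quantities $X_d = F(X_a, X_b)$, $X_e = F(X_b, X_c)$, $X_\ell = F(X_d, X_c)$ and $X_r = F(X_a, X_e)$, and then verify componentwise that $A_\ell = A_r$, $G_\ell = G_r$, and $H_\ell = H_r$. The four identities of Lemma~\ref{lem:CARE} should play the same role here as Lemma~\ref{lem:NME} did for the NME example.

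The $A$-component ought to be immediate: expanding the definition of $F$ gives $A_\ell = A_c \Delta_{d,c} A_b \Delta_{a,b} A_a$ and $A_r = A_c \Delta_{b,c} A_b \Delta_{a,e} A_a$, so part~(iv) of Lemma~\ref{lem:CARE} identifies them in a single line.

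For the $H$-component, I would first expand $H_\ell$ and $H_r$ using the definition of $F$, strip the outer factor $A_a^H (\,\cdot\,) A_a$, and reduce the claim to a $\Delta$-level identity involving $\Delta_{a,b}$, $\Delta_{a,e}$, $\Delta_{b,c}$, $\Delta_{d,c}$ together with $H_b$, $H_c$ and $G_a$. The key algebraic tool beyond Lemma~\ref{lem:CARE} is the Hermitian twin relation $\Delta_{a,b} G_a = G_a \Delta_{a,b}^H$, valid because $G_a$ and $H_b$ are Hermitian, which in particular yields $(I + H_b G_a)\Delta_{a,b}^H = I_n$. After substituting (i) for $\Delta_{a,e}$ on the right-hand side, this twin relation collapses the coefficient $\Delta_{a,b}^H + H_b \Delta_{a,b} G_a$ to the identity, and what remains matches through a direct appeal to~(iv).

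The $G$-component is structurally dual: here the Hermitian conjugate appears on the far right because $A_e^H = A_b^H \Delta_{b,c}^H A_c^H$ contributes an extra $\Delta_{b,c}^H$ factor in $G_r$ that is absent from the corresponding expansion of $G_\ell$. My plan is to expand both sides, use~(ii) to rewrite $\Delta_{d,c} - \Delta_{b,c}$, and then rely on $\Delta_{b,c} G_b = G_b \Delta_{b,c}^H$ together with the trivial commutation $(I + H_c G_b)^{-1} H_c G_b = H_c G_b (I + H_c G_b)^{-1}$ to move the unmatched $\Delta_{b,c}^H$ factor past $H_c G_b$. The main obstacle I anticipate is purely clerical: keeping track of which side of each $G$ or $H$ carries a $\Delta$ versus a $\Delta^H$ is the step most likely to derail an on-paper calculation, so I would want to write the $G$-case out with indices exposed throughout rather than telescoping early.
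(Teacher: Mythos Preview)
Your proposal is correct and follows essentially the same route as the paper's own proof: the same intermediates $X_d,X_e,X_\ell,X_r$, the same use of Lemma~\ref{lem:CARE}(iv) for the $A$-component, and the same Hermitian twin identities $I_n-H_b\Delta_{a,b}G_a=\Delta_{a,b}^H$ and $I_n-H_c\Delta_{b,c}G_b=\Delta_{b,c}^H$ to close the $H$- and $G$-components. The paper organizes the algebra slightly differently---it packages the cross terms into auxiliary quantities $\Pi_1,\Pi_2$ (for $G$) and $\Psi_1,\Psi_2$ (for $H$) and simplifies those first---but the computations collapse to exactly the identities you identified, so there is no substantive difference.
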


\begin{proof}
To start with, we define $X_{d}=F(X_{a},X_{b})$, $X_{e}=F(X_{b},X_{c})$, $X_{\ell}=F(X_d,X_{c})$ and $X_{r}=F(X_{a},X_{e})$. We see that
\begin{align*}
  A_{\ell} & =A_c \Delta_{d,c} A_d=A_c \Delta_{d,c} A_b \Delta_{a,b} A_a=A_c (\Delta_{d,c} A_b \Delta_{a,b}) A_a\\
  &=A_c (\Delta_{b,c} A_b \Delta_{a,e}) A_a=A_e \Delta_{a,e} A_a=A_r.
\end{align*}
By letting
\begin{align*}
&\Pi_1=
\Delta_{b,c}A_b\Delta_{a,b}G_aA_b^H-
\Delta_{b,c}A_b\Delta_{a,e}G_a A_b^H H_c\Delta_{b,c}A_b\Delta_{a,b}G_aA_b^H,\\
&\Pi_2=
\Delta_{b,c}A_b\Delta_{a,e}G_a A_b^H H_c\Delta_{b,c}G_b,\\
&\Pi_3 =
A_b^H H_c\Delta_{b,c}A_b\Delta_{a,b}-A_b^H H_c\Delta_{b,c}A_b\Delta_{a,b}G_a A_b^H H_c\Delta_{d,c}A_b\Delta_{a,b},\\
&\Pi_4 =
H_b\Delta_{a,b} G_a A_b^H H_c\Delta_{d,c}A_b\Delta_{a,b},
\end{align*}
we have
\[
\Delta_{d,c}G_d=\Delta_{b,c}G_b+\Pi_1-\Pi_2,
\]
where
\begin{align*}
\Pi_1
&=\Delta_{b,c}A_b(I_n-\Delta_{a,e}G_a A_b^H H_c\Delta_{b,c}A_b)\Delta_{a,b}G_a A_b^H\\
&=\Delta_{b,c}A_b\Delta_{a,e} [I_n+G_a(H_e-A_b^H H_c\Delta_{b,c}A_b)]\Delta_{a,b}G_a A_b^H\\
&=\Delta_{b,c}A_b\Delta_{a,e}G_a A_b^H
\end{align*}
and
\begin{align*}
\Pi_1-\Pi_2
&=\Delta_{b,c}A_b\Delta_{a,e}G_a A_b^H(I_n-H_c\Delta_{b,c}G_b)\\
&=\Delta_{b,c}A_b\Delta_{a,e}G_a A_b^H\Delta_{c,b}.
\end{align*}
Also,
\[
H_e\Delta_{a,e}=H_b\Delta_{a,b}+\Psi_1-\Psi_2,
\]
where
\begin{align*}
\Psi_1
&=A_b^H H_c\Delta_{b,c}(I_n-A_b\Delta_{a,b}G_b A_b^H H_c\Delta_{d,c})A_b\Delta_{a,b}\\
&=A_b^H H_c\Delta_{b,c}
{
[(I_n+G_dH_c)-A_b\Delta_{a,b}G_a A_b^H H_c]\Delta_{d,c}A_b\Delta_{a,b}
}
\\
&=A_b^H H_c\Delta_{b,c}
{
[I_n+(G_d-A_b\Delta_{a,b}G_a A_b^H) H_c]\Delta_{d,c}A_b\Delta_{a,b}
}
\\
&=A_b^H H_c\Delta_{b,c}
{
(I_n+G_bH_c)\Delta_{d,c}A_b\Delta_{a,b}
}
\\
&=A_b^H H_c
{
\Delta_{d,c}A_b
}
\Delta_{a,b}
\end{align*}
and
\begin{align*}
\Psi_1 - \Psi_2
&={
(I_n-H_b\Delta_{a,b} G_a)
}
A_b^H H_c\Delta_{d,c}A_b\Delta_{a,b}\\
&={
(I_n+H_bG_a)^{-1}
}
A_b^H H_c\Delta_{d,c}A_b\Delta_{a,b}\\
&={
\Delta_{a,b}^H
}
A_b^H H_c\Delta_{d,c}A_b\Delta_{a,b}.
\end{align*}

Consequently an appropriate calculation is implemented such that
\begin{align*}
G_{\ell}&=G_c+A_c
{
\Delta_{d,c}G_d
}
A_c^H\\
       &=G_c+A_c(\Delta_{b,c}G_b+\Delta_{b,c} A_b\Delta_{a,e}G_a
       {
       A_b^H \Delta_{c,b}) A_c^H
       }
       \\
       &=G_c+A_c(\Delta_{b,c}G_b)A_c^H+A_c (\Delta_{b,c} A_b\Delta_{a,e}G_aA_b^H \Delta_{c,b}) A_c^H\\
       &=G_e+A_e\Delta_{a,e}G_a A_e^H=G_r
\end{align*}
and
\begin{align*}
H_{\ell}&=H_d+A_d^H H_c\Delta_{d,c}A_d\\
       &=H_a+A_a^H (H_b\Delta_{a,b})A_a+A_a^H({
       \Delta_{a,b}^H}A_b^H H_c\Delta_{d,c}A_b \Delta_{a,b}] A_a\\
       &=H_a+A_a^H (H_b\Delta_{a,b}+\Delta_{a,b}^HA_b^H H_c\Delta_{d,c}A_b \Delta_{a,b}) A_a\\
       &=H_a+A_a^H
       {
       H_e\Delta_{a,e}
       }
       A_a=H_r,
\end{align*}
which completes the proof.

\end{proof}
Theorem~\ref{thm:care} shows that the sequence~\eqref{fixdare} has the semigroup property so that Algorithm~\ref{aa2} can be applied to provide an accelerated method. See~\cite{Lin2018a} for more details.

\end{example}

\section{Concluding remark}
As is well known, problems in determining the solutions of a matrix equation are firmly related to a wide range of challenging scientific areas.
Traditional approaches for finding a numerical solution are based on the fixed-point iteration, and the speed of the convergence is usually linear. In this paper, we investigate the semigroup property for some binary matrix operations and apply this property to construct one type of iterations for solving several matrix equations while the desired speed of convergence is given.  More precisely, we show that once the existence of the property holds for a given iteration, we can immediately construct an iterative method which can converge to the solution with the speed of convergence of any desired order. To interpret the robustness and capacity of our method, we apply this property to analyze some structured iterations arising from a class of matrix equations

For future work, we choose to treat some interesting problems. The first one is that for a given matrix equation,  how to discover the kind of iteration $X_{k+1}=F(X_k,X_1)$ so that the semigroup property holds. The other challenge problem is that how to simfplify the accelerated process for a fixed-point iteration if the semigroup property holds? All these questions are under investigation and will be reported elsewhere.
\section*{Acknowledgment}
This research work is partially supported by the Ministry of Science and Technology and the National Center for Theoretical Sciences in Taiwan. The first author (Matthew M. Lin) would like to thank the support from the Ministry of Science and Technology of Taiwan
under grants MOST 107-2115-M-006-007-MY2, and the corresponding author (Chun-Yueh Chiang) would like to thank the support from the Ministry of Science and Technology of Taiwan under the grant MOST 107-2115-M-150-002.

\bibliographystyle{plain}

\end{document}